\newtheorem{theorem}{Theorem}[section]
\newtheorem{proposition}[theorem]{Proposition}
\newtheorem{lemma}[theorem]{Lemma}
\newtheorem{claim}[theorem]{Claim}
\newtheorem{corollary}[theorem]{Corollary}
\newtheorem{definition}[theorem]{Definition}
\theoremstyle{definition}
\theoremstyle{definition}
\newtheorem*{remark*}{Remark}
\newtheorem*{proposition*}{Proposition}
\theoremstyle{definition}
\newcommand{\mrm}[1]{\mathrm{#1}}
\newcommand{\skipline}{$\phantom{}$}
\newcommand{\reals}{\mathbb{R}}
\newcommand{\spm}{\left\{-1,1\right\}}
\newcommand{\seq}{\subseteq}
\newcommand{\sm}{\setminus}
\newcommand{\defeq}{\mathrel{\mathop:}=}
\newcommand{\one}{\boldsymbol{1}}
\newcommand{\pr}{\Pr}
\newcommand{\given}[2]{#1\,\middle|\, #2}
\DeclareMathOperator*{\be}{\mathbb{E}}
\DeclareMathOperator*{\var}{\mrm{Var}}
\newcommand{\andd}{\wedge}
\newcommand{\wt}[1]{\widetilde{#1}}
\newcommand{\norm}[1]{\left\Vert {#1}\right\Vert}
\newcommand{\lnorm}[1]{\Vert {#1}\Vert}
\newcommand{\li}{\left}
\newcommand{\ri}{\right}
\newcommand{\cc}{\colon}
\newcommand{\set}[2]{\left\{ \given{#1}{#2}\right\}}
\newcommand{\prgiven}[2]{\pr\left[ \given{#1}{#2}\right]}
\newcommand{\linner}[2]{\langle {#1}, {#2} \rangle}
\newcommand{\beg}[3]{\begin{#1}\ifx\hxczuwx#2\hxczuwx\else\label{#2}\fi#3\end{#1}}
\newcommand{\bqed}{\hfill\blacksquare}
\titlespacing{\paragraph}{%
	0pt}{
	0.5\baselineskip}{
	1em}
\newcommand{\tops}[1]{\texorpdfstring{#1}{}}
\begin{document}
	
	\title{Slicing all Edges of an $n$-cube Requires $n^{2/3}$ Hyperplanes}

	\author{
		Ohad Klein\thanks{School of Computer Science and Engineering, Hebrew University of Jerusalem. \texttt{ohadkel@gmail.com}.\newline
		This research was supported in part by a grant from the Israel Science Foundation (ISF Grant No. 1774/20), and by a grant from the US-Israel Binational Science Foundation and the US National Science Foundation (BSF-NSF Grant No. 2020643).}
	}
	
	\maketitle
	
	\begin{abstract}
		Consider the $n$-cube graph with vertices $\spm^n$ and edges connecting vertices with hamming distance $1$. How many hyperplanes in $\reals^n$ are needed in order to dissect all edges? We show that at least $\wt{\Omega}(n^{2/3})$ are needed, which improves the previous bound of $\Omega(n^{0.51})$ by Yehuda and Yehudayoff.
	\end{abstract}
	

	\section{Introduction}
		Consider the hypercube graph $Q_n$ consisting of the vertices $\spm^{n} \subset \reals^n$ with edges connecting adjacent vertices $x,y$ satisfying $\sum_{i=1}^{n} |x_i-y_i| = 2$. It was asked how many hyperplanes in $\reals^n$ are required in order to dissect\footnote{A hyperplane dissects an edge if it intersects the edge, but does not contain any of its endpoints. In such a case we also say that the edge crosses the hyperplane. One may relax the notion to allow the hyperplane to include at most one of the endpoints -- our result will carry over and incur a multiplicative constant degradation.} all edges of $Q_n$ -- call this quantity $H(n)$. There are several natural configurations of hyperplanes involving $n$ hyperplanes, yielding $H(n)\leq n$. However, there is a configuration involving only $5$ planes in $6$ dimensions that was found by Paterson~\cite{Saks93}, implying that $H(n) \leq \lceil \frac{5n}{6}\rceil$. On the lower bound front, each hyperplane can dissect at most $\lceil n/2\rceil \cdot \binom{n}{\lceil n/2 \rceil}$ edges~\cite[Chapter~7.E]{Tit62}, out of $n \cdot 2^{n-1}$ -- hence at least $\Omega(\sqrt{n})$ hyperplanes are required. Even though the problem was posed in the 70s~\cite{ON71, Gr72}, only recently a better lower bound of $\Omega(n^{0.51})$ was given by Yehuda and Yehudayoff~\cite{YY21} (for the history of the problem and its applications, see references therein).
		In this paper we prove the following.
		\begin{theorem}\label{thm:main}
			\[
				H(n) \geq \Omega(n^{2/3} / \log(n)^{4/3}).
			\]
		\end{theorem}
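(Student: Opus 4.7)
The plan is to refine the Fourier/moment approach underlying the Yehuda--Yehudayoff $n^{0.51}$ bound, by combining a pointwise covering inequality with a dyadic bucketing of the hyperplanes and a H\"older-type averaging step. Setup: let $H_i = \{x \in \reals^n : \linner{a_i}{x} = b_i\}$ for $i \in [k]$ dissect every edge of $Q_n$, and set $f_i(x) = \sgn(\linner{a_i}{x} - b_i)$. Define the local sensitivity $s_i(x) = |\{j : f_i(x) \neq f_i(x \oplus e_j)\}|$. Since each of the $n$ edges at a vertex $x$ must be crossed by some hyperplane, $\sum_i s_i(x) \geq n$ holds pointwise in $x$. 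Integrating this constraint and using $\be_x[s_i(x)] \leq O(\sqrt{n})$ (the total influence bound for a halfspace) recovers the trivial $k \geq \Omega(\sqrt{n})$ bound, so any improvement must come from using the pointwise constraint beyond its average.

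The key observation is that $s_i$ concentrates highly non-uniformly across halfspaces: a majority-like $f_i$ has $s_i \in \{0, \Theta(n)\}$ with $\Pr_x[s_i(x) > 0] = \Theta(1/\sqrt{n})$, while a ``dictator'' $f_i = x_j$ has $s_i \equiv 1$. I would therefore partition the halfspaces into $O(\log n)$ dyadic buckets according to the scale of $\|a_i\|_\infty/\|a_i\|_2$ (and possibly the magnitude of $b_i$), within each bucket prove a uniform moment inequality $\be_x[s_i(x)^\alpha] \leq B_\alpha$ for some fractional $\alpha$ near $3/2$, and then combine the buckets by applying H\"older's inequality to $\sum_i s_i(x) \geq n$. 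The within-bucket moment inequality should follow from Erd\H{o}s--Littlewood--Offord-type anti-concentration: $|\linner{a_i}{x} - b_i|$ lies in a strip of width proportional to the bucket's maximum coordinate with probability $O(1/\sqrt{n})$, and on that event $s_i(x)$ is controlled by the bucket parameter.

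The main obstacle is twofold. First, a naive H\"older inequality on $\sum_i s_i(x) \geq n$ can actually give a bound \emph{weaker} than $\sqrt{n}$, because $s_i$ is so skewed for majority-like halfspaces; getting the right exponent $2/3$ will require a careful choice of the H\"older exponent matched to the bucketing. Second and more seriously, halfspaces from different buckets can share the burden of covering a single edge, so one cannot simply sum per-bucket lower bounds. I expect that overcoming this coupling requires partitioning the edges themselves into ``preferred'' buckets matching the hyperplane buckets, and showing via a second-moment or anti-concentration argument that most edges are best covered by their own preferred bucket. The $\log(n)^{4/3}$ factor in the theorem presumably tracks a $(\log n)^{2/3}$ loss from bucketing compounded with the $3/2$-H\"older exponent.
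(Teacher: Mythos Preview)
Your proposal has a genuine gap, and it is the one you yourself flag but do not resolve. The entire argument is carried out under the uniform measure on $\spm^n$: you start from the pointwise inequality $\sum_i s_i(x)\ge n$ and hope to extract $k\ge n^{2/3}$ via H\"older and moment bounds on $s_i$. But as you note, for a majority-type halfspace $s_i$ is $\Theta(n)$ on an event of probability $\Theta(1/\sqrt n)$ and $0$ otherwise, so $\be[s_i^\alpha]\asymp n^{\alpha-1/2}$; plugging this into the H\"older step $n^\alpha\le k^{\alpha-1}\sum_i\be[s_i^\alpha]$ gives $k\ge n^{1/(2\alpha)}$, which is \emph{worse} than $\sqrt n$ for every $\alpha>1$. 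Bucketing the hyperplanes does not help by itself, because the ``middle layers'' configuration (all $a_i$ proportional to $(1,\dots,1)$) lives in a single bucket and already saturates the $\sqrt n$ barrier under the uniform edge distribution. Your proposed fix---partitioning the edges into preferred buckets and a second-moment coupling argument---is not a concrete step; it is a restatement of the difficulty.

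The paper's proof is organized around a different idea that your outline does not contain: instead of analyzing the uniform edge distribution more cleverly, it \emph{constructs} an edge distribution adapted to the given hyperplanes. One forms a random bias vector $P$ as a bounded linear combination of (dyadically decomposed) normals $v_\ell$, draws a vertex $U\sim\mu_P$, and picks a uniform direction $k$. The point of $P$ is that $\langle v_\ell,P\rangle$ has standard deviation $\Theta(\sqrt{n/m})$ for every $\ell$, so the projection of the random edge onto $v_\ell$ (an interval of length $O(|v_{\ell,k}|)$) misses the point $t_\ell$ with high probability; a Littlewood--Offord argument then gives $\Pr[(U,k)\text{ crosses }H_\ell]\le O(\sqrt m\,\log^2 n/n)$, and a union bound over $m$ hyperplanes forces $m\ge\Omega(n^{2/3}/\log^{4/3}n)$. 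The dyadic decomposition in the paper is of the \emph{coordinates of a single} $v_\ell$ (to handle large $\|v_\ell\|_\infty$), not of the family of hyperplanes, and the $\log$ losses come from that decomposition and a Hoeffding tail, not from H\"older. The adaptive choice of measure is the missing ingredient in your plan.
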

		While it is likely that the inverse logarithmic factors can be improved, we are unaware of polynomial improvements. Some tools, especially the anti-concentration arguments are shared with~\cite{YY21}, but the general approach seems to be different.

	\subsection{Proof overview}\label{ssec:proof-overview}
		The lower bound $H(n) \geq \Omega(\sqrt{n})$ proceeds by showing each hyperplane dissects at most $O(1/\sqrt{n})$ of the available edges in $Q_n$.
		\paragraph{Example of middle layers.}
		In fact, once we surpass $O(\sqrt{n})$ hyperplanes, the planes can dissect almost all edges. To see this, partition all edges of the hypercube into $n+1$ layers, the $k$'th of which connects vertices of Hamming weight $k$ to those with weight $k+1$. Each such layer is dissected by a hyperplane orthogonal to the vector $(1,\ldots, 1)$. Most edges in the hypercube are contained in the $O(\sqrt{n})$ middle layers, thus a uniformly random edge is likely to cross some hyperplane in a fixed set of $O(\sqrt{n})$ planes.
		\newtheorem*{question*}{Question}
		\begin{question*}
		Is there a (natural) distribution over edges which is likely to \emph{evade} (i.e. not cross) a set of $m$ hyperplanes?
		\end{question*}
		In our example, the answer is positive -- choose a uniformly random layer in $Q_n$ (out of $n$), and draw a random edge $E$ within that layer. What about a general configuration of planes?
		\paragraph{Setting.}
		Assume $m$ hyperplanes are given, with the $\ell$'th hyperplane specified by
		\beg{equation}{eq:Hi}{
			H_\ell = \set{x \in \reals^n}{\linner{v_\ell}{x} = t_\ell},
		}
		with $v_{\ell} \in \reals ^n$ having $\norm{v_\ell}_2 = 1$ and $t_{\ell} \in \reals$.

		\paragraph{Middle layers revisited.}
		We present a different view for why in the example of the middle layers, the random edge $E$ is likely to evade a small set of hyperplanes having $v_\ell=\frac{1}{\sqrt{n}}(1,\ldots,1)$.
		When projecting the edge $E$ on $v_\ell$, we produce an interval of length $2/\sqrt{n}$ that is located anywhere on $(-\sqrt{n}, \sqrt{n}) \cdot v_\ell$ equally likely. Meanwhile each layer $H_\ell$ projected on $v_\ell$ is a single point. Hence $\Omega(n)$ hyperplanes with $v_\ell=\frac{1}{\sqrt{n}}(1,\ldots,1)$ are required in order to dissect all edges.

		\paragraph{The general case.} In general, projecting an edge on $v_\ell$ results in an interval of length $\leq 2\norm{v_\ell}_{\infty}$ parallel to $v_\ell$. Assuming $\norm{v_\ell}_{\infty} = \wt{O}(1/\sqrt{n})$ (which holds for a uniformly random vector $v_\ell$ with norm $1$), we have that each edge corresponds to an interval of length $\wt{O}(1/\sqrt{n})$ on $v_\ell$, while $H_\ell$ is projected to a point. Hence, if we are able to produce a random edge whose projection on any $v_\ell$ has a significant variance -- it will likely evade every $H_\ell$.

		\paragraph{The distribution.} Keeping the assumption $\norm{v_\ell}_{\infty} = \wt{O}(1/\sqrt{n})$ for the $m$ hyperplanes, let
		\[
			P = \sum_{\ell =1}^{m} \alpha_\ell\sqrt{\frac{n}{m}} v_\ell,
		\]
		with $\alpha_\ell \sim U([-1,1])$ independently. Note that each coordinate $P_i$ has $\be[P_i]= 0$ and $\var[P_i]=(n/3m) \sum_{\ell=1}^{m} v_{\ell i}^2= \wt{O}(1)$. Hence we (informally) imagine $P$ as a vertex in $\spm^n$. Moreover, the standard deviation of the (norm of the) projection of $P$ on $v_\ell$ is at least $\Omega(\sqrt{n/m})$.
		Considering a uniformly random edge incident to $P$, the probability that the projection of this edge on $v_\ell$ (which has length $\wt{O}(1/\sqrt{n})$) crosses the projection of $H_\ell$ on $v_\ell$ (which is a single point), is about $(1/\sqrt{n})/\sqrt{n/m} = \sqrt{m}/n$. Since there are $m$ hyperplanes, and we wish the edge to evade all of them, it is sufficient that $m \cdot \sqrt{m}/n \ll 1$, or in other words, $m \ll n^{2/3}$.

		\paragraph{Getting rid of the assumption.}
		Although abandoning the assumption $\norm{v_\ell}_{\infty} = \wt{O}(1/\sqrt{n})$ introduces a few technicalities in the proof, it does not severely affect the general approach.
		We get $v_\ell$ decomposed into different groups of coordinates, such that $v_\ell$ is roughly constant on each group. We call this process a `binary decomposition'. Then, we slightly alter the definition of $P$ and the rest of the proof generalizes smoothly.

		\paragraph{Organization.} In Sections~\ref{ssec:decomp}-\ref{ssec:evasive} we define a distribution over edges. Sections~\ref{ssec:linear}-\ref{ssec:anti} introduce linear forms and some of their anti-concentration properties. Finally, Section~\ref{ssec:glue} shows the distribution evades the hyperplanes. Appendix~\ref{app:simple-proofs} contains proofs of standard facts that are used in the paper.

	\section{Proof}
		\subsection{A random bounded vector correlating with all \tops{$v_\ell$}}\label{ssec:decomp}
		The proof overview in Section~\ref{ssec:proof-overview} assumes that $\norm{v_\ell}_{\infty}$ is small. The following definition is useful in order to deal with general vectors $v_\ell$.
		\begin{definition}[Vector Binary Decomposition]\label{def:decompose}
			Let $v \in \reals^n$. For an integer $j$, let $K(j)$ be the set of coordinates of $v$ whose absolute value is in the range $(2^{-j-1} , 2^{-j}]$, that is
			\[
				K(j) = \set{k\in [n]}{ 2^{-j-1} < |v_k| \leq 2^{-j} }.
			\]
			Let $J$ be these $j$'s for which $K(j)$ is nonempty. The Binary decomposition of $v$ is 
			\[
				v = \sum_{j \in J} v^{(j)},
			\]
			where $v^{(j)} \in \reals^n$ aggregates all coordinates $K(j)$, that is
			\[
				v^{(j)}_k = v_k \cdot \one \li\{ k \in K(j) \ri\}.
			\]
		\end{definition}

		\begin{definition}[The Random Bias]\label{def:bias-rv}
			Let $v_1, \ldots, v_m$ be vectors in $\reals^n$ having $\norm{v_\ell}_2=1$ for all $\ell$.
			Let $v_\ell=\sum_{j \in J_\ell} v_\ell^{(j)}$ be the Binary decomposition of $v_\ell$, according to Definition~\ref{def:decompose}.
			Define the random variable
			\[
				P \defeq \frac{1}{10\sqrt{m\log(n)}}\sum_{\ell =1}^{m} \sum_{j \in J_\ell} \alpha_{\ell j} 2^{j} v_\ell^{(j)}
			\]
			with $\alpha_{\ell j} \sim U([-1, 1])$ uniformly and independently distributed in the interval $[-1, 1]$.
		\end{definition}
		
		We will use $P$ in order to produce a product distribution over $\spm^n$ which has mean $P$. For the sake of this distribution not to be too biased, we note that with high probability $\lnorm{P}_{\infty} \leq 1/2$; the simple proof appears in Appendix~\ref{app:simple-proofs}.
		\begin{lemma}\label{lem:Linfty}
			Let $P$ be distributed as in Definition~\ref{def:bias-rv}, then
			\[
				\pr[\lnorm{P}_\infty > 1/2] \leq 2/n,
			\]
			where the randomness is taken over the distribution of $\alpha_{\ell j}$'s.
		\end{lemma}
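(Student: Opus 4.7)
The plan is to fix a coordinate $i \in [n]$, bound $|P_i|$ via a standard concentration inequality, and then union-bound over the $n$ coordinates.

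First I would unpack $P_i$ coordinate-wise. The key structural observation is that the binary decomposition partitions the coordinates: for each $\ell$, at most one $j \in J_\ell$ satisfies $(v_\ell^{(j)})_i \neq 0$ -- namely the unique $j$ with $i \in K(j)$ relative to $v_\ell$ (if any). Moreover, for such $j$, the definition of $K(j)$ gives $|v_{\ell,i}| \leq 2^{-j}$, so the weight $|2^j v_{\ell,i}| \leq 1$. Setting $X_\ell \defeq \alpha_{\ell j} \cdot 2^j (v_\ell^{(j)})_i$ (and $X_\ell = 0$ if no such $j$ exists), the $X_\ell$ are independent, symmetric about $0$, and bounded by $1$ in absolute value, and
\[
P_i = \frac{1}{10\sqrt{m \log n}} \sum_{\ell=1}^m X_\ell.
\]

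Now I would apply Hoeffding's inequality. Since $|X_\ell| \leq 1$, we have $\sum_\ell (b_\ell - a_\ell)^2 \leq 4m$, so for any $t > 0$,
\[
\pr\left[\left|\sum_\ell X_\ell\right| \geq t\right] \leq 2 \exp\left(-\frac{t^2}{2m}\right).
\]
Taking $t = 5\sqrt{m\log n}$ (which corresponds to $|P_i| \geq 1/2$) yields the bound $2 \exp(-\tfrac{25}{2}\log n) = 2 n^{-25/2}$.

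Finally, a union bound over the $n$ choices of $i$ gives $\pr[\lnorm{P}_\infty > 1/2] \leq 2 n^{1 - 25/2} \leq 2/n$, as required. The argument is essentially routine; the only conceptual point is noticing that the binary decomposition is precisely what ensures each summand in the definition of $P$ contributes a bounded, symmetric increment to each coordinate, so Hoeffding applies cleanly. No step here looks like a real obstacle.
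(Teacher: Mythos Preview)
Your proof is correct and follows the same approach as the paper: observe that the binary decomposition makes each coordinate $P_i$ a normalized sum of at most $m$ independent random variables bounded by $1$ in absolute value, apply Hoeffding, and union-bound over the $n$ coordinates. The only difference is that the paper first expands each uniform $\alpha_{\ell j}$ as an infinite dyadic sum of Rademacher variables so as to invoke its Claim~\ref{clm:chernoff} verbatim, whereas you apply Hoeffding's inequality directly to the bounded continuous summands---a slightly cleaner route yielding the same (indeed stronger than needed) tail bound.
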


		\subsection{Hyperplane-evasive edges}\label{ssec:evasive}
		
		\begin{definition}
			Given a vector $p \in [-1, 1]^n$, we define a probability distribution $\mu_p$ on vectors $z \in \spm^n$, by letting $z_i$ be independent random variables with
			\[
				\pr[z_i = 1] = \frac{1+p_i}{2}, \qquad \pr[z_i = -1] = \frac{1-p_i}{2}.
			\]
		\end{definition}

		We now define a distribution on edges of $Q_n$ which are likely to evade every $H_\ell$.
		\begin{definition}\label{def:evasive}
			Let $H_i, v_i, t_i$ be as in Section~\ref{ssec:proof-overview},
			and let $P$ be a random variable which is distributed according to Definition~\ref{def:bias-rv}, conditioned on that $\lnorm{P}_{\infty} \leq 1/2$.

			Let $U$ be a random vertex of $Q_n$ drawn from $\mu_P$, and let $k \sim [n]$ be uniformly distributed. We define $(U,k)$ as the edge which is incident to $U$ and is parallel to the $k$'th axis.
		\end{definition}

		Our main technical result is the following proposition, which states that the probability of the edge $(U, k)$ to cross any of the hyperplanes $H_i$ is small.
		\begin{proposition}\label{prop:bound-halfspace}
			Let $(U,k)$ be distributed as in Definition~\ref{def:evasive}. Then for all $\ell$,
			\beg{equation}{eq:bound-halfspace}{
				\Pr[(U,k)\text{ crosses } H_\ell] \leq O\li(  \sqrt{m}\log(n)^2/n  \ri).
			}
		\end{proposition}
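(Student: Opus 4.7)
The plan is to reduce the crossing event to an anti-concentration statement about a linear form in $U$, and then to exploit the randomness of $P$ to extract the necessary density bound. Set $S_k \defeq \sum_{i \neq k} v_{\ell, i} U_i$. The two endpoints of the edge $(U,k)$ have inner products $S_k \pm v_{\ell, k}$ with $v_\ell$, so the edge crosses $H_\ell$ exactly when $|S_k - t_\ell| \leq |v_{\ell, k}|$. It thus suffices to bound $\pr[|S_k - t_\ell| \leq |v_{\ell, k}|]$ and then average over $k \sim [n]$.

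I would condition on $P$ and decompose $S_k = \mu_k(P) + Z_k$, where $\mu_k(P) = \sum_{i \neq k} v_{\ell, i} P_i$ and $Z_k$ is a mean-zero sum of independent bounded random variables; since $\lnorm{P}_\infty \leq 1/2$, its conditional variance is $\Omega(1 - v_{\ell, k}^2)$. The randomness of $\mu_k(P)$ (via $P$) should suffice to smooth out the discrete distribution of $Z_k \mid P$: as a linear combination $\sum_{\ell', j} c_{\ell', j}\,\alpha_{\ell', j}$ of independent uniforms, $\mu_k(P)$ has density at most $1/(2|c|)$ for the coefficient $c$ of any single $\alpha_{\ell', j}$. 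Propagating this smoothness through the linearized dependence of $S_k$ on the $\alpha$'s---the role of the anti-concentration tools of Sections~\ref{ssec:linear}--\ref{ssec:anti}---should give $\pr[S_k \in I] \leq O(|I|/|c|)$ for any short interval $I$.

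The right choice is the ``self'' coefficient at the scale $j = j(k)$ with $k \in K_\ell(j(k))$, namely
\[
    c \;=\; \frac{2^{j(k)}}{10\sqrt{m\log n}}\Bigl(\lnorm{v_\ell^{(j(k))}}_2^2 - v_{\ell, k}^2\Bigr).
\]
The binary decomposition yields $\lnorm{v_\ell^{(j(k))}}_2^2 \geq |K_\ell(j(k))|/4^{j(k)+1}$, so provided $|K_\ell(j(k))|$ exceeds a small absolute constant, $|c| = \Omega\bigl(|K_\ell(j(k))| \cdot 2^{-j(k)}/\sqrt{m\log n}\bigr)$. Combined with $|v_{\ell, k}| \leq 2^{-j(k)}$, this yields
\[
    \pr\bigl[|S_k - t_\ell| \leq |v_{\ell, k}|\bigr] \;\leq\; O\!\left(\frac{\sqrt{m\log n}}{|K_\ell(j(k))|}\right).
\]
The boundary cases---$v_{\ell, k}^2 > 1/2$ (which by $\lnorm{v_\ell}_2 = 1$ restricts $k$ to a set of $O(1)$ indices), $|K_\ell(j(k))|$ below the constant, and scales $j > \log_2 n$ (handled by a crude bound using the tiny $|v_{\ell, k}|$)---each contribute only $O(1/n)$ after averaging.

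Averaging over $k \sim [n]$ then gives
\[
    \pr[(U,k)\text{ crosses } H_\ell] \;\leq\; \sum_{j \in J_\ell} \frac{|K_\ell(j)|}{n} \cdot O\!\left(\frac{\sqrt{m\log n}}{|K_\ell(j)|}\right) + O(1/n) \;=\; O\!\left(\frac{|J_\ell|\sqrt{m\log n}}{n}\right),
\]
and after truncating scales to $j \leq \log_2 n$ (which loses only $O(1/n)$) we have $|J_\ell| = O(\log n)$, matching the stated bound. The main obstacle will be the rigorous anti-concentration step---establishing $\pr[S_k \in I] \leq O(|I|/|c|)$ through the combined randomness of $P$ (via the uniform $\alpha$'s) and $U \mid P$ (a discrete sample), while respecting the conditioning $\lnorm{P}_\infty \leq 1/2$ (which by Lemma~\ref{lem:Linfty} costs only a $1 + O(1/n)$ factor). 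This interplay, rather than the combinatorial averaging, is where Sections~\ref{ssec:linear}--\ref{ssec:anti} are expected to do the real work.
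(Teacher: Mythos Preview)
Your outline is broadly aligned with the paper's strategy---reduce to an anti-concentration bound for $\langle v_\ell,U\rangle$, exploit the ``self'' uniform variable $\alpha_{\ell,j(k)}$ inside $P$, and sum over the binary scales---but there are two genuine gaps that the paper's proof resolves and that your sketch does not.

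\medskip
\textbf{The density claim $\pr[S_k\in I]\le O(|I|/|c|)$ is too strong and its justification is incomplete.} The density of $\mu_k(P)$ (a sum of independent uniforms) does \emph{not} transfer to $S_k$, because $Z_k=S_k-\mu_k(P)$ is a function of $U\mid P$ and hence of $P$ itself; the two pieces are correlated, and a convolution argument is unavailable. The paper's fix is to insert an intermediate scale: one defines the event $\neg E_2=\{|\langle v-v^{(i)},x\rangle+\langle v^{(i)},P\rangle-t|<\sigma\lnorm{v^{(i)}}_2\}$ with $\sigma\asymp\sqrt{\log n}$. This event depends on $\alpha_{1i}$ (through $\langle v^{(i)},P\rangle$) but \emph{not} on the $K(i)$-coordinates of $x$, so (i) its probability is bounded by the genuine density argument over $\alpha_{1i}$, giving $\pr[\neg E_2]\le O(\sigma\sqrt{m\log n/|K(i)|})$; (ii) on its complement, Hoeffding on $\langle v^{(i)},x\rangle-\langle v^{(i)},P\rangle$ makes the target event $O(1/n)$-rare; and (iii) conditioned on $\neg E_2$, the $K(i)$-coordinates of $x$ are still free, so Littlewood--Offord gives $Q(|v_k|,X_{v^{(i)},P})\le O(1/\sqrt{a_k})$ with $a_k$ the rank of $|v_k|$ within $K(i)$. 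The resulting per-$k$ bound is $O(1/n)+O(1/\sqrt{a_k})\cdot O(\sigma\sqrt{m\log n/|K(i)|})$, which is \emph{worse} than your $O(|I|/|c|)$ for the top-ranked $k$'s; only after summing $\sum_{a=1}^{|K(i)|}1/\sqrt{a}=O(\sqrt{|K(i)|})$ do you recover the per-scale bound $O(\sqrt{m}\log n)$. So the correct argument uses both sources of randomness in a layered way, and your uniform per-$k$ target is not what actually holds.

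\medskip
\textbf{The boundary cases do not cost only $O(1/n)$.} When $|K(j(k))|$ is below your absolute constant, your self-coefficient $c$ can vanish (indeed $c=0$ if $K(j(k))=\{k\}$), and there is no reason the number of such scales is $O(1)$: take $v$ with geometrically decreasing coordinates so that every $|K(j)|=1$ and $|J|$ is as large as $n$. Likewise, for $|v_k|<1/n$ the ``tiny $|v_k|$'' does not by itself force $\pr[|S_k-t|<|v_k|]=O(1/n)$; $S_k$ is discrete and can have atoms of size $\Theta(1/\sqrt{n})$ at the relevant point. The paper handles all of this with an additional event $E_1=\{|\langle v-v^{(i)},x\rangle-t|\ge 3\lnorm{v^{(i)}}_1\}$ and Lemma~\ref{lem:group}: using the anti-concentration coming from \emph{larger} scales, one gets $\pr[\neg E_1]\le O(2^{-r})$ where $r$ counts (in units of $2\log n$) the number of scales above $i$. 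This exponential decay is what makes $\sum_{i\in J}$ converge to $O(\log n)$ regardless of $|J|$, and it is exactly the missing ingredient in your truncation step.
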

		\begin{corollary}[Restatement of Theorem~\ref{thm:main}]
			\[
				H(n) \geq \Omega(n^{2/3} / \log(n)^{4/3}).
			\]
		\end{corollary}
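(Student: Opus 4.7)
The plan is to obtain the corollary as an immediate consequence of Proposition~\ref{prop:bound-halfspace} via a union bound. Suppose that $m$ hyperplanes $H_1, \ldots, H_m$ dissect every edge of $Q_n$. By construction of $\mu_P$, the vector $U$ always lies in $\spm^n$ and $k \in [n]$, so the pair $(U,k)$ from Definition~\ref{def:evasive} is always a legitimate edge of $Q_n$. The dissection assumption therefore forces the random edge $(U,k)$ to cross at least one $H_\ell$ with probability $1$, irrespective of the joint distribution of $(U,k)$ or of the conditioning on $\lnorm{P}_\infty \leq 1/2$ built into the definition.

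Applying the union bound and then Proposition~\ref{prop:bound-halfspace} to each summand, we obtain
\[
1 \;\leq\; \sum_{\ell=1}^{m} \Pr[(U,k) \text{ crosses } H_\ell] \;\leq\; m \cdot O\!\left( \sqrt{m}\, \log(n)^2 / n \right).
\]
Rearranging this inequality gives $m^{3/2} \geq \Omega(n / \log(n)^2)$, i.e.\ $m \geq \Omega(n^{2/3} / \log(n)^{4/3})$. Since the hyperplane configuration was arbitrary, any dissecting family must contain at least this many hyperplanes, yielding $H(n) \geq \Omega(n^{2/3} / \log(n)^{4/3})$ as required.

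The deduction of the corollary from the proposition is thus a one-line union bound and has no real obstacle. All the genuine difficulty sits inside Proposition~\ref{prop:bound-halfspace}: the construction of the biased vertex $P$ must be calibrated so that the projection of $P$ on each $v_\ell$ has standard deviation $\Omega(\sqrt{n/m})$ while keeping $\lnorm{P}_\infty \leq 1/2$ (Sections~\ref{ssec:decomp}--\ref{ssec:evasive}), the anti-concentration estimates for $\linner{v_\ell}{U}$ must be sharp on each piece of the binary decomposition (Sections~\ref{ssec:linear}--\ref{ssec:anti}), and the ``gluing'' step must show that flipping a uniformly random coordinate of $U$ produces a projection interval on $v_\ell$ of length $\wt{O}(1/\sqrt{n})$ unlikely to contain $t_\ell$ (Section~\ref{ssec:glue}). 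The final inverse-logarithmic factors of the corollary are inherited directly from the $\log(n)^2$ factor in the proposition, and polynomial improvements there would translate immediately into corresponding improvements on $H(n)$.
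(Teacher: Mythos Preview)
Your proof is correct and follows exactly the same approach as the paper: assume $m$ hyperplanes dissect all edges, observe that the random edge $(U,k)$ must then cross some $H_\ell$ with probability $1$, apply a union bound together with Proposition~\ref{prop:bound-halfspace}, and rearrange to get $m \geq \Omega(n^{2/3}/\log(n)^{4/3})$. The additional commentary you include about where the real difficulty lies is accurate but not part of the paper's proof of the corollary itself.
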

		\begin{proof}
			Suppose the hyperplanes $H_1, \ldots, H_m$ from~\eqref{eq:Hi} dissect all edges of $Q_n$. Since $(U,k)$ is a distribution over edges of $Q_n$, we have $\pr[\exists i \cc (U,k)\text{ crosses } H_i] = 1$.
			However, by union bound and Proposition~\ref{prop:bound-halfspace},
			\[
				\pr[\exists i \cc (U,k)\text{ crosses } H_i] \leq \sum_{i=1}^{m} \pr[(U,k)\text{ crosses } H_i] \leq
				O\li(
					m^{3/2} \log(n)^{2} / n
				\ri).
			\]
			This yields $m \geq \Omega(n^{2/3} / \log(n)^{4/3})$.
		\end{proof}

		We prove Proposition~\ref{prop:bound-halfspace} using anti-concentration techniques. In order to phrase our result in a probabilistic setting, we use the following claim.
		\begin{claim}\label{clm:inter-cond}
			In order for an edge $(u, k)$ to cross the hyperplane $H = \set{x \in \reals^n}{\linner{v}{x} = t}$ we must have
			\begin{equation}\label{eq:inter-cond}
				|\linner{v}{u} - t| < 2|v_k|.
			\end{equation}
		\end{claim}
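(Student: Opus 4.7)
The plan is essentially one line of algebra followed by an elementary observation about dissection. The edge $(u,k)$ has endpoints $u$ and $u'$, where $u'$ is obtained from $u$ by flipping the $k$-th coordinate, so that $u' - u = -2u_k e_k$ with $e_k$ the $k$-th standard basis vector and $u_k \in \{-1,1\}$. Taking inner products with $v$ yields
\[
\linner{v}{u'} - \linner{v}{u} = -2 u_k v_k,
\]
so the two scalar quantities $a \defeq \linner{v}{u} - t$ and $b \defeq \linner{v}{u'} - t$ satisfy $|a - b| = 2|v_k|$.

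Now I would invoke the definition of dissection. The hyperplane $H$ dissects the edge $(u,k)$ precisely when the segment from $u$ to $u'$ crosses $H$ transversally, i.e. $u$ and $u'$ lie on strictly opposite sides of $H$, so neither endpoint is on $H$. In terms of $a$ and $b$ this means they are both nonzero and have opposite signs; in particular $|a|, |b| < |a - b| = 2|v_k|$. Reading off the bound on $|a|$ gives exactly $|\linner{v}{u} - t| < 2|v_k|$, which is~\eqref{eq:inter-cond}.

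There is essentially no obstacle here: the content of the claim is just that moving along one coordinate changes $\linner{v}{\cdot}$ by $\pm 2 v_k$, so only values of $\linner{v}{u}$ within distance $2|v_k|$ of $t$ can straddle the level set $\linner{v}{x} = t$. The only minor care needed is to note that the strict inequality in~\eqref{eq:inter-cond} uses the convention (stated in the paper's footnote) that dissection excludes the endpoints lying on $H$; if one allows an endpoint on $H$, then the inequality weakens to $\leq$, but this affects the bound only by a constant factor, consistent with the footnote's remark.
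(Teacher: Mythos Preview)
Your proof is correct and follows essentially the same argument as the paper: both note that the other endpoint $u'$ differs from $u$ by $\pm 2e_k$, so $\linner{v}{u'}-t$ and $\linner{v}{u}-t$ differ by $\pm 2v_k$, and since crossing forces these to be nonzero with opposite signs one gets $|\linner{v}{u}-t| < 2|v_k|$. You additionally spell out the elementary fact that opposite nonzero signs give $|a|<|a-b|$, which the paper leaves implicit.
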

		\begin{proof}
			Let $u'$ be the vertex connected to $u$ by $(u,k)$. That is, $u'$ is either $u+2e_k$ or $u-2e_k$. In order for the edge $(u, k)$ to cross $\set{x \in \reals^n}{\linner{v}{x} = t}$ we must have that $\linner{v}{u}-t$ and $\linner{v}{u'}-t$ have different signs (and are nonzero). Since $\linner{v}{u'} = \linner{v}{u\pm 2e_k} = \linner{v}{u}\pm 2v_k$, the edge $(u,k)$ may cross $H$ only if~\eqref{eq:inter-cond} holds.
		\end{proof}

		\subsection{Linear forms}\label{ssec:linear}
		\begin{definition}[Biased Linear Form]\label{def:blf}
			Let $v, p \in \reals^n$ have $\lnorm{p}_{\infty} \leq 1$ and let $x \sim \mu_p$. We define the associated biased linear form to be the random variable
			\[
				X_{v, p} \defeq \linner{v}{x}.
			\]
		\end{definition}
		We recall a standard tail inequality for linear forms.
		\begin{claim}[Chernoff-Hoeffding inequality~\cite{Hoeff63}]\label{clm:chernoff}
			Let $p,v \in \reals^n$ have $\lnorm{p}_{\infty} \leq 1$. Then for any $\sigma > 0$,
			\beg{equation}{eq:chernoff}{
				\Pr_{x \sim \mu_p}[ |X_{v,p} - \linner{v}{p}| > \sigma \lnorm{v}_2 ] \leq 2\exp(-\sigma^2/2).
			}
		\end{claim}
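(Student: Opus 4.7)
The statement is a direct application of the classical Hoeffding inequality, and I would present it as such. Let me outline the short proof.

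First, since $x \sim \mu_p$ has $\be[x_i] = \tfrac{1+p_i}{2}\cdot 1 + \tfrac{1-p_i}{2}\cdot(-1) = p_i$, by linearity of expectation $\be[X_{v,p}] = \sum_{i=1}^{n} v_i \be[x_i] = \linner{v}{p}$. Thus the centered random variable is
\[
    X_{v,p} - \linner{v}{p} = \sum_{i=1}^{n} v_i (x_i - p_i),
\]
which is a sum of independent mean-zero random variables, since the coordinates $x_i$ are independent under $\mu_p$.

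Next, I would bound each term. The variable $x_i - p_i$ takes the two values $1 - p_i$ and $-1 - p_i$, so it lies in an interval of length exactly $2$, hence $v_i(x_i - p_i)$ lies in an interval of length $2|v_i|$. The sum of squared interval-lengths is therefore
\[
    \sum_{i=1}^{n} (2|v_i|)^2 = 4 \lnorm{v}_2^2.
\]

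Finally, I would apply Hoeffding's inequality: for independent random variables $Y_i$ with $Y_i$ supported in an interval of length $L_i$, one has $\Pr[|\sum_i (Y_i - \be[Y_i])| > t] \leq 2\exp(-2t^2 / \sum_i L_i^2)$. Applying this to $Y_i = v_i(x_i - p_i)$ with $t = \sigma \lnorm{v}_2$ yields
\[
    \Pr[|X_{v,p} - \linner{v}{p}| > \sigma \lnorm{v}_2] \leq 2\exp\!\li(-\frac{2\sigma^2 \lnorm{v}_2^2}{4 \lnorm{v}_2^2}\ri) = 2\exp(-\sigma^2/2),
\]
which is exactly~\eqref{eq:chernoff}. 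Since Hoeffding's inequality is being invoked as a black box, there is no genuine obstacle here; the only subtlety is keeping track of the factor of $2$ in the interval length of $x_i - p_i$ so that the constant in the exponent comes out as $1/2$ rather than something larger. Alternatively, one could give a self-contained proof via the standard Chernoff argument, bounding $\be[\exp(\lambda v_i(x_i - p_i))] \leq \exp(\lambda^2 v_i^2/2)$ (the Hoeffding lemma for $[-1,1]$-bounded variables after rescaling), multiplying across $i$, applying Markov, and optimizing $\lambda = \sigma / \lnorm{v}_2$; but invoking Hoeffding directly is cleaner.
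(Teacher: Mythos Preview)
Your proof is correct. The paper does not supply its own proof of this claim; it simply records the inequality as a standard fact and cites Hoeffding~\cite{Hoeff63}, so your write-up (spelling out that $\be[X_{v,p}]=\linner{v}{p}$, that each summand $v_i(x_i-p_i)$ lives in an interval of length $2|v_i|$, and then invoking Hoeffding's inequality) is exactly the intended justification.
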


		\subsection{Anti-concentration of linear forms}\label{ssec:anti}

		\begin{definition}[L\'{e}vy Concentration function]\label{def:levy}
			Let $X$ be a random variable. It's $\alpha$-concentration is defined as
			\[
				Q(\alpha, X) \defeq \sup_{t \in \reals} \Pr[ |X - t| < \alpha].
			\]
		\end{definition}
		We need a basic property of concentration functions.
		\begin{claim}\label{clm:levy-simple}\skipline
			Every random variable $X$ and integer $k \geq 1$ satisfies $Q(k\alpha, X)\leq k Q(\alpha, X)$.
		\end{claim}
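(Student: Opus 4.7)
The plan is to reduce a probability bound on the large interval $(t-k\alpha, t+k\alpha)$ to a union-bound sum of probabilities on $k$ smaller intervals of half-length $\alpha$, each of which contributes at most $Q(\alpha, X)$ by definition. This is the standard tiling argument for the Lévy concentration function.

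Concretely, fix $t \in \reals$ and note that the open interval $(t - k\alpha, t + k\alpha)$ has length $2k\alpha$, so it is covered by the $k$ open intervals
\[
I_i \defeq \bigl( t + (2i - k - 1)\alpha,\ t + (2i - k + 1)\alpha \bigr), \qquad i = 1, \ldots, k,
\]
each of which has length $2\alpha$ and can be written as $\{ x : |x - s_i| < \alpha\}$ with center $s_i \defeq t + (2i - k)\alpha$. By a union bound,
\[
\Pr[ |X - t| < k\alpha ] \leq \sum_{i=1}^{k} \Pr[|X - s_i| < \alpha] \leq k \cdot Q(\alpha, X),
\]
where the last step uses the definition of $Q(\alpha, X)$ as a supremum over centers. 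Taking the supremum over $t$ on the left-hand side then yields $Q(k\alpha, X) \leq k Q(\alpha, X)$.

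There is essentially no obstacle here; the only thing to be mildly careful about is that the tiling covers the \emph{open} interval $(t-k\alpha, t+k\alpha)$, which matches the strict inequality in Definition~\ref{def:levy}. Endpoints shared between adjacent $I_i$'s do not affect the union bound, so no further care is needed.
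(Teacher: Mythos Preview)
Your approach is exactly the paper's (``almost cover and take a limit''), but the write-up has two slips, one cosmetic and one substantive.

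\textbf{Arithmetic.} Your centers $s_i = t + (2i-k)\alpha$ are off by $\alpha$: for $k=1$ you get $I_1=(t,\,t+2\alpha)$ instead of $(t-\alpha,\,t+\alpha)$, and in general your union sits on $(t-(k-1)\alpha,\,t+(k+1)\alpha)$ rather than $(t-k\alpha,\,t+k\alpha)$. The intended centers are $s_i = t + (2i-k-1)\alpha$.

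\textbf{Endpoints.} Even after fixing the centers, your $k$ open intervals of length $2\alpha$ are placed end-to-end, so consecutive $I_i$'s meet at a single point that lies in \emph{neither} open interval. Hence $\bigcup_i I_i$ equals $(t-k\alpha,\,t+k\alpha)$ minus $k-1$ points, and the containment $\{|X-t|<k\alpha\}\subseteq \bigcup_i I_i$ fails. Your sentence ``endpoints shared between adjacent $I_i$'s do not affect the union bound'' is therefore wrong: if $X$ has an atom at one of those points, the first inequality in your display breaks down. This matters here, since the paper applies the claim to the discrete variables $X_{v^{(i)},p}$.

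The fix is precisely the limit the paper alludes to: for any $\eps>0$ cover the slightly smaller interval $(t-k\alpha+\eps,\,t+k\alpha-\eps)$ by $k$ \emph{overlapping} open intervals of half-length $\alpha$, obtaining $\Pr[|X-t|<k\alpha-\eps]\leq kQ(\alpha,X)$; then let $\eps\downarrow 0$ and use continuity of measure from below to conclude $\Pr[|X-t|<k\alpha]\leq kQ(\alpha,X)$.
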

		\begin{proof}
			Follows by (almost) covering an interval of length $2k\alpha$ using $k$ intervals of length $2\alpha$ (and taking a limit).
		\end{proof}
		Next, we need a simple Littlewood-Offord-type result regarding the anti-concentration of a biased linear form $X$. For example, it follows from~\cite{Rog61}. For completeness, we include a quick proof in Appendix~\ref{app:simple-proofs}.
		\begin{lemma}\label{lem:littlewood}
			There is a universal constant $C \geq 1$, such that the following holds. Let $X_{v,p}$ be a biased linear form with $\lnorm{p}_{\infty} \leq 1/2$.
			Suppose that $\alpha$ satisfies $\alpha \leq \min\{|v_1|, \ldots, |v_a|\}$, then
			\[
				Q(\alpha, X_{v, p}) \leq C /\sqrt{a}.
			\]
		\end{lemma}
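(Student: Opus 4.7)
The plan is to reduce the biased-Bernoulli setting to the classical (unbiased) Littlewood--Offord theorem via a coupling argument. First, I would reduce to the case $n = a$ and $|v_i| \geq \alpha$ for every $i$: writing $X_{v,p} = A + B$ with $A = \sum_{i \leq a} v_i x_i$ and $B$ the remaining (independent) part, we have $Q(\alpha, X_{v,p}) \leq Q(\alpha, A)$ by averaging over $B$ and taking a supremum over shifts, so only the $a$ large coordinates matter.

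Next, couple each biased coordinate to an unbiased one using the mixture decomposition $x_i = (1 - \xi_i)\sgn(p_i) + \xi_i z_i$, where $\xi_i \sim \mrm{Ber}(1 - |p_i|)$ are independent and the $z_i$ are independent uniform $\pm 1$ signs; the assumption $|p_i| \leq 1/2$ ensures $\pr[\xi_i = 1] \geq 1/2$, and a direct calculation checks that the marginal of $x_i$ coincides with $\mu_p$. Conditioning on the vector $\xi$ and letting $S = \{i : \xi_i = 1\}$, the sum becomes a constant $C(\xi)$ plus the unbiased signed sum $\sum_{i \in S} v_i z_i$, whose coefficients still satisfy $|v_i| \geq \alpha$. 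For this classical object, a one-line Sperner argument gives the concentration bound: WLOG $v_i > 0$, and the preimages in $\{-1,1\}^{|S|}$ of any open length-$2\alpha$ interval form an antichain (flipping any coordinate moves the sum by at least $2\alpha$), so the probability mass is at most $\binom{|S|}{\lfloor |S|/2\rfloor}/2^{|S|} = O(1/\sqrt{|S|})$.

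Finally, combine via the standard supremum/expectation swap $Q(\alpha, X_{v,p}) \leq \be_\xi[Q(\alpha, X_{v,p} \mid \xi)]$ together with a Chernoff bound yielding $\pr[|S| \geq a/4] \geq 1 - e^{-\Omega(a)}$, giving
\[
\be_\xi\li[Q(\alpha, X_{v,p} \mid \xi)\ri] \leq O(1/\sqrt{a}) + e^{-\Omega(a)} \leq C/\sqrt{a},
\]
where the trivial bound $Q \leq 1$ absorbs the rare event that $|S|$ is too small. I do not foresee a significant obstacle: the coupling is elementary, the Sperner argument is a one-liner, and the remaining pieces (Chernoff, the sup/expectation swap) are routine. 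The only point to watch is that one must invoke the unbiased Littlewood--Offord bound only after the coupling has absorbed the bias, which is precisely why the hypothesis $|p_i| \leq 1/2$ (rather than merely $|p_i| < 1$) is essential --- it guarantees a positive fraction of coordinates remain genuinely random after conditioning on $\xi$.
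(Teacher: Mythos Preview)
Your proposal is correct and follows essentially the same route as the paper: both reduce to the unbiased case via the same mixture/coupling $x_i = (1-\xi_i)\sgn(p_i) + \xi_i z_i$ with $\xi_i \sim \mrm{Ber}(1-|p_i|)$, apply Sperner on the surviving coordinates, and then average. The only cosmetic differences are that the paper phrases the coupling as ``zeroing coordinates of $v$'' and uses Chebyshev (giving $\pr[|S|<a/4]\leq 4/a$) rather than your Chernoff bound --- both tails are $O(1/\sqrt{a})$, so nothing changes.
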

		While Lemma~\ref{lem:littlewood} gives at most an $O(1/\sqrt{n})$ bound on $Q(\alpha, X_{v,p})$ we can sometimes exponentially improve this bound, as the following lemma demonstrates.
		\beg{lemma}{lem:group}{
			Let $v, p\in \reals^n$ have $\lnorm{p}_{\infty} \leq 1/2$.
			Suppose $v$ has a binary decomposition
			\[
				v = \sum_{j \in J} v^{(j)},
			\]
			with the corresponding partition of the nonzero coordinates as $\bigcup_{j \in J} K(j)$.
			Let $\alpha > 0$ have at least $2r\log(n)$ elements $j \in J$ with $2^{-j-1} \geq \alpha$, where $r \geq 0$ is an integer.
			\begin{enumerate}
				\item Let $C$ be the universal constant from Lemma~\ref{lem:littlewood}. If $n \geq 10\exp(10C^2)$, then
				\beg{equation}{eq:negl}{
					Q(\alpha, X_{v, p}) \leq 2^{-r}.
				}
				\item For all $n$,
				\beg{equation}{eq:negl2}{
					Q(\alpha, X_{v, p}) \leq O(2^{-r}).
				}
			\end{enumerate}
		}
		\begin{proof}
		The proof of~\eqref{eq:negl} is by induction on $r$, where the $r=0$ case is trivial.

		Let $L=\lceil 4C^2 \rceil$ and $j_1 < \ldots < j_{L}$ be the $L$ largest $j \in J$ satisfying $2^{-j-1} \geq \alpha$. Denote
		\[
			\alpha' \defeq \sum_{j \geq j_1} \lnorm{v^{(j)}}_1.
		\]
		\begin{claim}\label{clm:alpha}
		 	There are at least $2(r-1)\log(n)$ elements $j \in J$ with $2^{-j-1} \geq 2\alpha'$.
		\end{claim}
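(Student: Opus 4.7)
My plan is to upper bound $\alpha'$ by $n \cdot 2^{-j_1}$ and then count how many elements of $S \defeq \{j \in J : 2^{-j-1} \geq \alpha\}$ lie sufficiently far below $j_1$ that the condition $2^{-j-1} \geq 2\alpha'$ is automatic. Since $|S| \geq 2r\log n$ and $j_1$ is the $L$-th largest element of $S$, this count will come out to at least $2(r-1)\log n$ after discarding $L$ plus an $O(\log n)$ ``buffer''.

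For the bound on $\alpha'$, I would use that (by Definition~\ref{def:decompose}) every $j \in J$ and every $k \in K(j)$ satisfies $|v_k| \leq 2^{-j}$, so $\norm{v^{(j)}}_1 \leq 2^{-j}|K(j)|$. Since $j \geq j_1$ in the sum defining $\alpha'$, and the $K(j)$'s are pairwise disjoint subsets of $[n]$,
\[
\alpha' \;=\; \sum_{j \in J,\, j \geq j_1} \norm{v^{(j)}}_1 \;\leq\; 2^{-j_1} \sum_{j \in J,\, j \geq j_1} |K(j)| \;\leq\; n \cdot 2^{-j_1}.
\]
Consequently $2\alpha' \leq 2n \cdot 2^{-j_1}$, and any integer $j \leq j_1 - 2 - \log_2 n$ satisfies $2^{-j-1} \geq 2n \cdot 2^{-j_1} \geq 2\alpha'$.

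For the counting step, since $j_1$ is by construction the $L$-th largest element of $S$, $|S \cap (-\infty, j_1)| \geq |S| - L \geq 2r\log n - L$. At most $\lceil \log_2 n \rceil + 2$ of these can lie in the buffer $(j_1 - 2 - \log_2 n,\, j_1)$, so at least $2r\log n - L - \lceil \log_2 n \rceil - 2$ elements of $S$ satisfy $j \leq j_1 - 2 - \log_2 n$ and hence $2^{-j-1} \geq 2\alpha'$. Under the standing quantitative hypothesis $n \geq 10\exp(10C^2)$, $\log n$ comfortably dominates both $L = \lceil 4C^2 \rceil$ and the buffer $\lceil \log_2 n \rceil + 2$, so this lower bound is at least $2(r-1)\log n$, as required. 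I do not anticipate a conceptual obstacle: the argument is a direct $\ell_1$ bound on $\alpha'$ combined with a counting argument, and the only care required is to keep the $O(\log n)$ bookkeeping inside the $2\log n$ gap between $2r\log n$ and $2(r-1)\log n$.
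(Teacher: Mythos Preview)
Your proposal is correct and follows essentially the same approach as the paper: bound $2\alpha' \leq 2n\cdot 2^{-j_1}$ via the trivial $\ell_1$ estimate, then discard the $L$ elements $j_1,\ldots,j_L$ together with the at most $\approx \log_2(2n)$ indices $j<j_1$ that could fail $2^{-j-1}\geq 2\alpha'$, and check that $L+\log_2(2n)\leq 2\log n$ under $n\geq 10\exp(10C^2)$. The only difference is cosmetic bookkeeping (you phrase the discard as a ``buffer'' interval, adding a harmless extra constant), so there is nothing further to compare.
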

		To prove Claim~\ref{clm:alpha}, observe that $2\alpha' \leq 2n \cdot 2^{-j_1}$, hence there are at most $\log_2(2n)$ elements $j\in J$ with $j < j_1$ and $2^{-j-1} < \alpha'$. Including $j_1, \ldots, j_{L}$, there are at most $\log_2(2n)+L$ elements $j \in J$ with $2^{-j-1} \in [\alpha, \alpha')$. Since $n \geq 10\exp(10C^2)$, we have $\log_2(2n)+L \leq 2\log(n)$, thus establishing Claim~\ref{clm:alpha}.$\bqed$

		Given a decomposition $v = u+w$ we have $X_{v,p} = X_{u,p} + X_{w,p}$. We set
		\[
			u \defeq \sum_{j\in J \cc j < j_1} v^{(j)}, \qquad w \defeq \sum_{j\in J \cc j \geq j_1} v^{(j)}.
		\]
		Since $u$ and $w$ are supported on different coordinates, we have that $X_{u,p}$ and $X_{w,p}$ are independent random variables.

		We use the induction hypothesis~\eqref{eq:negl} with $2\alpha'$ instead of $\alpha$ and obtain
		\[
			Q(2\alpha', X_{u, p}) \leq 2^{1-r}.
		\]
		We immediately complete the inductive proof of~\eqref{eq:negl} using the following claim.
		\begin{claim}\label{clm:induction}
			$Q(\alpha, X_{v,p}) \leq \frac{1}{2} Q(2\alpha', X_{u, p})$.
		\end{claim}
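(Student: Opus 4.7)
The key decomposition is given to us: $v = u + w$ with $X_{v,p} = X_{u,p} + X_{w,p}$ and, since $u,w$ are supported on disjoint coordinate sets, these two summands are independent. The plan is to extract from $w$ two useful facts and then combine them by conditioning on $X_{u,p}$. The factor $\tfrac12$ and the widening $\alpha \mapsto 2\alpha'$ will arise from these two facts respectively.

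\textbf{Two facts about $X_{w,p}$.} First, since every $x_i \in \{-1,1\}$, we get the deterministic bound $|X_{w,p}| \leq \lnorm{w}_1 = \alpha'$. Second, $w$ contains, for each of the indices $j_1, \ldots, j_L$ (with $L=\lceil 4C^2 \rceil$), at least one coordinate $k$ whose $|w_k|=|v_k| > 2^{-j-1} \geq \alpha$. So $w$ has at least $L$ coordinates of absolute value at least $\alpha$, and Lemma~\ref{lem:littlewood} (applied to $X_{w,p}$ with $a = L$) gives
\[
Q(\alpha, X_{w,p}) \leq C/\sqrt{L} \leq 1/2.
\]
Also note $\alpha \leq \alpha'$, since $\alpha'$ already upper-bounds the single coordinate in $K(j_1)$ whose absolute value exceeds $\alpha$.

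\textbf{Combining via conditioning.} Fix an arbitrary $t \in \reals$ and set $f(s) \defeq \pr[|X_{w,p}+s-t|<\alpha]$. By independence of $X_{u,p}$ and $X_{w,p}$,
\[
\pr[|X_{v,p}-t|<\alpha] = \be[f(X_{u,p})].
\]
Two observations bound the right-hand side: (i) $f(s) \leq Q(\alpha, X_{w,p}) \leq 1/2$ for every $s$; (ii) $f(s) > 0$ forces some realization $x_w$ of $X_{w,p}$ (so $|x_w| \leq \alpha'$) with $|s - t| < \alpha + |x_w| \leq \alpha + \alpha' \leq 2\alpha'$. Therefore
\[
\be[f(X_{u,p})] \leq \tfrac{1}{2}\,\pr[f(X_{u,p})>0] \leq \tfrac{1}{2}\,\pr[|X_{u,p}-t|<2\alpha'] \leq \tfrac{1}{2}\, Q(2\alpha', X_{u,p}).
\]
Taking the supremum over $t$ yields the claim.

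\textbf{Main obstacle.} There is no real obstacle; the only subtlety is noticing that Lemma~\ref{lem:littlewood}, which is normally used to bound anti-concentration for the full linear form, is in fact strong enough when applied to the ``high-coordinate'' piece $w$ alone -- giving the crucial constant $1/2$ -- while the deterministic support bound $|X_{w,p}| \leq \alpha'$ is what lets us replace $\alpha$ by $2\alpha'$ in the $X_{u,p}$ term. The choice $L=\lceil 4C^2\rceil$ is exactly calibrated so that these two effects combine cleanly.
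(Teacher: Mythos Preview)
Your proof is correct and follows essentially the same approach as the paper: both use the deterministic bound $|X_{w,p}|\le\alpha'$ to widen $\alpha$ to $2\alpha'$ on the $X_{u,p}$ side, and both invoke Lemma~\ref{lem:littlewood} on $w$ (with its $L=\lceil 4C^2\rceil$ large coordinates) to extract the factor $1/2$. The only difference is cosmetic---you package the conditioning via the function $f(s)$ and the inequality $\be[f(X_{u,p})]\le\tfrac12\pr[f(X_{u,p})>0]$, whereas the paper phrases it directly as $Q(\alpha,X_{v,p})\le Q(2\alpha',X_{u,p})\cdot Q(\alpha,X_{w,p})$.
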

		In order to prove Claim~\ref{clm:induction} we show the following for all $t \in \reals$
		\[
			\pr[ |X_{v,p} - t| < \alpha] \leq \frac{1}{2}\pr[ |X_{u,p} - t| < 2\alpha'].
		\]
		Indeed, note that by definition of $\alpha'$ we always have $|X_{w,p}| \leq \alpha'$. Hence, if $|X_{v,p} - t| < \alpha$ then necessarily $|X_{u,p} - t| < \alpha' + \alpha < 2\alpha'$. Conditioned on any specific value of $X_{u,p}$, we also need that
		\beg{equation}{eq:Qw}{
			|X_{w,p} - (t - X_{u,p})| = |X_{v,p} - t| < \alpha.
		}
		However, since $X_{w,p}$ is independent of $X_{u,p}$, the probability that~\eqref{eq:Qw} holds is upper bounded by $Q(\alpha, X_{w,p})$. Therefore, $Q(\alpha, X_{v,p}) \leq Q(2\alpha', X_{u, p}) \cdot Q(\alpha, X_{w, p})$, and Claim~\ref{clm:induction} follows from
		\beg{equation}{eq:Qw-aux}{
			Q(\alpha, X_{w, p}) \leq 1/2.
		}
		To confirm~\eqref{eq:Qw-aux} note that $w$ contains at least $L$ coordinates greater than $\alpha$, which reside in the non-empty sets $K(j_1), \ldots, K(j_L)$, hence Lemma~\ref{lem:littlewood} implies $Q(\alpha, X_{w, p}) \leq C/\sqrt{L} \leq 1/2$.
		$\bqed$

		The proof of~\eqref{eq:negl} is complete. Finally, observe that~\eqref{eq:negl2} follows from~\eqref{eq:negl} -- by plugging in a large enough constant in the right hand side of~\eqref{eq:negl2}, it holds even when $n < 10\exp(10C^2)$.
		\end{proof}

		\subsection{Evasiveness of Definition~\ref{def:evasive}}\label{ssec:glue}
		\beg{lemma}{lem:glue}{
			Let $\bar{v}_1, \ldots, \bar{v}_m$ be vectors in $\reals^n$ with $\norm{\bar{v}_i}_2 = 1$, and suppose that $P$ is distributed according to Definition~\ref{def:bias-rv}, conditioned on $\lnorm{P}_{\infty} \leq 1/2$.
			If $x \sim \mu_P$, then for $v=\bar{v}_1$ and $t \in \reals$ we have
			\beg{equation}{eq:glue}{
				\sum_{k = 1}^{n} \pr_{x \sim \mu_P}[|\linner{v}{x} - t| < 2 |v_{k}|] \leq O(\sqrt{m} \log(n)^2).
			}
		}
		This lemma immediately implies our main technical result.
		\begin{proof}[Proof of Proposition~\ref{prop:bound-halfspace}]
			Since $k$ in Definition~\ref{def:evasive} is uniformly random (out of $n$ options), Lemma~\ref{lem:glue} implies~\eqref{eq:bound-halfspace} through Claim~\ref{clm:inter-cond}.
		\end{proof}

		\begin{proof}[Proof of Lemma~\ref{lem:glue}]
			Suppose $v=\bar{v}_1$ has the binary decomposition
			\[
				v = \sum_{j \in J} v^{(j)},
			\]
			with the corresponding partition of the nonzero coordinates as $\bigcup_{j \in J} K(j)$.
			We show the following. If $i \in J$ has at least $2r\log(n)$ elements in $J$ smaller than $i$, with $r \geq 0$ an integer, then
			\beg{equation}{eq:group-total}{
				\sum_{k \in K({i})} \pr_{x \sim \mu_P}[|\linner{v}{x} - t| < 2 |v_{k}|] \leq
				O\li(
					2^{-r} \sqrt{m} \log(n) + \frac{|K({i})|}{n}
				\ri).
			}
			Note that the probability is taken both with respect to $P$ and $x\sim \mu_P$.
			Let us see that~\eqref{eq:group-total} implies~\eqref{eq:glue}:
			\[
			\begin{aligned}
				\sum_{k = 1}^{n} \pr_{x \sim \mu_P}[|\linner{v}{x} - t| < 2 |v_{k}|]
				& = 
				\sum_{j \in J} \sum_{k \in K({j})} \pr_{x \sim \mu_P}[|\linner{v}{x} - t| < 2 |v_{k}|]
				\\ & \leq
				O\li(
					\sum_{j \in J} 2^{-r(j)} \sqrt{m} \log(n) + \frac{|K({j})|}{n}
				\ri)
				\\ & \leq
				O\li( \sqrt{m} \log(n)^2  \ri) + O(1),
			\end{aligned}
			\]
			with $r(j)$ the maximal $r$ corresponding to $j$. Note that $\sum_{j\in J} 2^{-r(j)} = O(\log(n))$ as a sum of a geometric progression with each term appearing $\approx 2\log(n)$ times. The estimate~\eqref{eq:glue} follows.

			\medskip\noindent \textbf{Proving ~\eqref{eq:group-total}.}
			We let $\sigma \defeq 2+2\sqrt{\log(n)}$ and define the events
			\[
				E \defeq E_1 \cup E_2 \defeq \li\{ |\linner{v - v^{(i)}}{x} - t| \geq 3\lnorm{v^{(i)}}_1 \ri\} \cup \li\{ |\linner{v - v^{(i)}}{x} + \linner{v^{(i)}}{P} - t| \geq \sigma\lnorm{v^{(i)}}_2 \ri\},
			\]
			that split the probability space into $E$ and $\neg{E}$. We prove later that
			\beg{equation}{eq:negl-Inf}{
				\forall k \in K(i) \cc
				\prgiven{|\linner{v}{x} - t| < 2 |v_{k}|}{E}
				\leq O\li(
					1/n
				\ri).
			}
			and
			\beg{equation}{eq:negl-prob}{
				\pr[\neg{E}] \leq O\li(
					2^{-r}\sigma \sqrt{\frac{m\log(n)}{|K({i})|}}
				\ri),
			}
			This allows us to bound the terms on the left hand side of~\eqref{eq:group-total} as
			\beg{equation}{eq:total-prob}{
			\begin{aligned}
				\pr\li[ |\linner{v}{x} - t| < 2 |v_{k}| \ri]
				& =
				\prgiven{|\linner{v}{x} - t| < 2 |v_{k}|}{E} \pr[E] + 
				\prgiven{|\linner{v}{x} - t| < 2 |v_{k}|}{\neg{E}} \pr[\neg{E}]
				\\ & \leq
				O(1/n) +
				\prgiven{|\linner{v}{x} - t| < 2 |v_{k}|}{\neg{E}}
				\cdot
				O\li(
					2^{-r}\sigma \sqrt{\frac{m\log(n)}{|K({i})|}}
				\ri),
			\end{aligned}
			}
			for all $k \in K(i)$.
			Observe that conditioning on the value of $P$, the event $E$ is independent of the $K(i)$ coordinates of $x$. Hence, under any fixing of $P=p$,
			\beg{equation}{eq:Eindep}{
				\pr_{x\sim \mu_p}\li[ \given{|\linner{v}{x} - t| < 2 |v_{k}|}{\neg{E}, P=p} \ri] \leq Q(2 |v_{k}|, X_{v^{(i)}, p}) \leq 2Q(|v_{k}|, X_{v^{(i)}, p}),
			}
			using Claim~\ref{clm:levy-simple}.
			Assuming $|v_k|$ is the $a$'th largest coordinate in $v^{(i)}$, Lemma~\ref{lem:littlewood} implies that
			$Q(|v_k|, X_{v^{(i)}, P}) \leq O(1/\sqrt{a})$ (recall that we conditioned on $\lnorm{P}_{\infty} \leq 1/2$). Together with~\eqref{eq:Eindep} this yields
			\[
				\prgiven{|\linner{v}{x} - t| < 2 |v_{k}|}{\neg{E}} = \be_{P}\li[ \pr_{x\sim \mu_P}\li[ \given{|\linner{v}{x} - t| < 2 |v_{k}|}{\neg{E}, P} \ri] \ri] \leq O(1/\sqrt{a}).
			\]
			Computing the sum over all $k \in K(i)$ we refine~\eqref{eq:total-prob} into
			\[
			\begin{aligned}
				\sum_{k \in K(i)} \pr\li[ |\linner{v}{x} - t| < 2 |v_{k}| \ri]
				&\leq
				\sum_{a=1}^{|K(i)|} \li( O(1/n) + O(1/\sqrt{a}) \cdot O\li(
					2^{-r}\sigma \sqrt{\frac{m\log(n)}{|K({i})|}}
				\ri) \ri)
				\\ & =
				O\li(
				\frac{|K(i)|}{n} + 2^{-r} \sqrt{m} \log(n)
				\ri),
			\end{aligned}
			\]
			yielding~\eqref{eq:group-total}. It remains to verify~\eqref{eq:negl-Inf} and~\eqref{eq:negl-prob}.

			\medskip\noindent \textbf{Proving ~\eqref{eq:negl-Inf}.}
			Recall $E$ is the union of the two events $E_1$ and $E_2$. Under $E_1$ we actually have the stronger
			\beg{equation}{eq:negl_E1}{
				\prgiven{|\linner{v}{x} - t| < 2 |v_{k}|}{E_1} = 0.
			}
			Indeed, if $E_1$ happens then
			\[
			\begin{aligned}
				\li| \linner{v}{x} - t \ri| &=
				\li|  \linner{v - v^{(i)}}{x} + \linner{v^{(i)}}{x} - t \ri|
				\\
				& \geq 
				\li|  \linner{v - v^{(i)}}{x} - t \ri| - \li| \linner{v^{(i)}}{x} \ri|
				\\
				& \geq 
				3\lnorm{v^{(i)}}_1 - \lnorm{v^{(i)}}_1 \\
				& = 2\lnorm{v^{(i)}}_1 \geq 2|v_{1k}|,
			\end{aligned}
			\]
			giving~\eqref{eq:negl_E1}. We now prove~\eqref{eq:negl-Inf} with $E_2$ instead of $E$. If $E_2$ happens, then $|\linner{v}{x} - t| < 2 |v_{k}|$ means
			\[
			\begin{aligned}
				\li| \linner{v^{(i)}}{P} - \linner{v^{(i)}}{x} \ri|
				& \geq \li| \linner{v^{(i)}}{P} + \linner{v - v^{(i)}}{x} - t \ri| - |\linner{v}{x} - t|
				\\ & > \sigma\lnorm{v^{(i)}}_2 - 2|v_k|
				\geq (\sigma-2)\lnorm{v^{(i)}}_2,
			\end{aligned}
			\]
			by the triangle inequality. 
			Using again the fact that under conditioning of the value of $P$, the $K(i)$ coordinates of $x$ are independent of $E_2$, together with Hoeffding's inequality, we deduce
			\[
			\begin{aligned}
				\prgiven{|\linner{v}{x} - t| < 2 |v_{k}|}{E_2}
				& = \be_{P}\li[ \prgiven{|\linner{v}{x} - t| < 2 |v_{k}|}{E_2, P} \ri]
				\\ &=
				\be_{P} \li[ \pr \li[ \given{\li| \linner{v^{(i)}}{P} - \linner{v^{(i)}}{x} \ri| > (\sigma-2) \lnorm{v^{(i)}}_2}{P} \ri] \ri]
				\\ &
				\leq 2 \exp( -(\sigma-2)^2 / 2) = 2 / n^2.
			\end{aligned}
			\]
			This concludes the proof of~\eqref{eq:negl-Inf}.

			\medskip\noindent \textbf{Proving~\eqref{eq:negl-prob}.} Unlike the rest of the proof, we henceforth assume that $P$ has the exact same distribution as in Definition~\eqref{def:bias-rv}, and we are explicit about the conditioning on $\lnorm{P}_{\infty} \leq 1/2$.

			Let $F'$ be the event that $\{\lnorm{P}_{\infty} \leq 1/2\}$ and let $F$ be the event that $\{\lnorm{P}_{\infty, [n] \sm K(i)} \leq 1/2\}$, meaning that all coordinates of $P$ except those in $K(i)$ are less than $1/2$ in absolute value.

			In~\eqref{eq:negl-prob} we seek to bound $\prgiven{\neg{E}}{F'}$. However, it is more convenient to bound $\prgiven{\neg{E}}{F}$ instead. There is not much loss in doing so, in virtue of Lemma~\ref{lem:Linfty} and that $F' \seq F$:
			\[
				\prgiven{\neg{E}}{F'} = \frac{\pr\li[\neg{E} \andd F' \ri]}{\pr[F']} \leq \frac{\pr\li[\neg{E} \andd F \ri]}{(1-2/n) \pr[F]} \leq O(\prgiven{\neg{E}}{F}).
			\]
			Thus~\eqref{eq:negl-prob} is reduced to
			\beg{equation}{eq:negp}{
				\prgiven{\neg{E}}{F}
				\leq
				O\li(
					2^{-r}\sigma \sqrt{\frac{m\log(n)}{|K({i})|}}
				\ri)
			}
			The estimate~\eqref{eq:negp} follows by that $E=E_1\cup E_2$ hence $\prgiven{\neg{E}}{F}= \prgiven{\neg{E_1}}{F} \prgiven{\neg{E_2}}{F,\neg{E_1}}$, and
			\beg{equation}{eq:E_1}{
				\prgiven{\neg{E_1}}{F} \leq O(2^{-r}),
			}
			\vspace{-0.5cm}
			\beg{equation}{eq:E_2}{
				\prgiven{\neg{E_2}}{F,\neg{E_1}} \leq O\li(
					\sigma \sqrt{\frac{m\log(n)}{|K({i})|}}
				\ri).
			}
			To confirm~\eqref{eq:E_1} we use Lemma~\ref{lem:group}. Denote $\alpha\defeq 3\lnorm{v^{(i)}}_1$ and note $\alpha \leq 3n \cdot 2^{-i}$. Recalling the definition of $r$, we see that there are at least $2(r-1)\log(n)$ elements $j \in J$ with $2^{-j-1} \geq \alpha$. Hence~\eqref{eq:negl2} reads as $Q(\alpha, X_{u,P}) \leq O(2^{1-r})$ with $u=v-v^{(i)}$, thus implying~\eqref{eq:E_1}.

			In order to prove~\eqref{eq:E_2} we recall that by definition~\ref{def:bias-rv}
			\[
			\begin{gathered}
				P = \frac{1}{10\sqrt{\log(n)}} \sum_{\ell =1}^{m} \sum_{j \in J_\ell} \alpha_{\ell j} \frac{2^j}{\sqrt{m}} \bar{v}_\ell^{(j)},
				\\
				(\neg{E_2}) = \li\{|\linner{v - v^{(i)}}{x} + \linner{v^{(i)}}{P} - t| < \sigma\lnorm{v^{(i)}}_2 \ri\},
			\end{gathered}
			\]
			with $\alpha_{\ell j} \sim U([-1,1])$. Roughly speaking, we show that the random variable $\linner{v^{(i)}}{P}$ has a significant variance, and hence $\neg{E_2}$ is a rare event.

			Fix (that is, condition on) any value of $\alpha_{\ell j}$ whenever $(\ell, j) \neq (1, i)$ (recall $v=\bar{v}_1$) and on all coordinates of $x$ that are outside $K(i)$. Observe that the occurrence of the events $E_1$ and $F$ is determined by this fixing. However, since $\alpha_{1 i}$ is independent of all other $\alpha_{\ell j}$, and it affects only the $K(i)$ coordinates of $P$ (hence is independent also of $F$ and $E_1$), $\alpha_{1 i}$ is still uniformly distributed in $[-1, 1]$. Hence, the following is a variable which is uniformly distributed in some real interval
			\begin{equation}\label{eq:uniform-var}
				\linner{v^{(i)}}{P} = \alpha_{1 i} \frac{2^{i}}{10\sqrt{m\log(n)}} \linner{v^{(i)}}{v^{(i)}} + \rho,
			\end{equation}
			with $\rho \in \reals$ a constant depending on the fixing. Note that by definition of binary decomposition, $v^{(i)}$ has $K(i)$ nonzero coordinates, all having absolute values in the range $(2^{-i-1}, 2^{-i}]$. Hence $\lnorm{v^{(i)}}_2^2 = \Theta(|K(i)| 4^{-i})$. Overall, the probability that $\neg{E_2}$ happens -- which means that $\linner{v^{(i)}}{P}$ is only $\sigma\lnorm{v^{(i)}}_2$ away from the constant $(t-\linner{v - v^{(i)}}{x})$ -- is bounded by the anti-concentration of the uniform variable~\eqref{eq:uniform-var}:
			\[
				\prgiven{\neg{E_2}}{F,\neg{E_1}} \leq \frac{\sigma\lnorm{v^{(i)}}_2}{\frac{2^{i}}{10\sqrt{m\log(n)}}\linner{v^{(i)}}{v^{(i)}}} = \frac{10\sigma \sqrt{m\log(n)}}{2^{i}\lnorm{v^{(i)}}_2} = \frac{10\sigma \sqrt{m}{\log(n)}}{\sqrt{|K(i)|}},
			\]
			yielding~\eqref{eq:E_2} and concluding the proof.
		\end{proof}

		\section*{Acknowledgements}
			The author is grateful to Nathan Keller for matchmaking him with the problem. I thank Yiting Wang for spotting an inaccuracy in an earlier version of this paper. The help of Yotam Shomroni regarding the presentation is highly appreciated.


	\begin{appendices}
	\section{Proofs of standard facts}\label{app:simple-proofs}
	\begin{proof}[Proof of Lemma~\ref{lem:Linfty}]
			Consider $P_i$, it is a random variable equal to
			\[
				\frac{1}{10\sqrt{m\log(n)}} \sum_{\ell =1}^{m} \alpha_\ell v_{\ell i}'
			\]
			with $v_{\ell i}' \in (1/2, 1]$ being $v_{\ell i}$ multiplied by an integral power of 2, and $\alpha_\ell \sim U([-1, 1])$.
			Note that each $\alpha_\ell$ may be implemented as $\sum_{j=1}^{\infty} 2^{-j} x_j$ with $x_j \sim \spm$ uniformly and independently.
			If we hence aggregate an infinite vector $u=(2^{-j} v_{\ell i}')_{\ell =1, j=1}^{m, \infty}$, then $P_i$ has the distribution of $\frac{1}{10\sqrt{m\log(n)}} \linner{u}{x}$ with all $x_i\sim \spm$ independently and uniformly distributed. Note also that $\lnorm{u}_2 = \sum_{\ell =1}^{m} v_{\ell i}'^2 / 3 \leq m/3$. Therefore, Claim~\ref{clm:chernoff} implies
			\[
				\pr_{\alpha}[|P_i| \geq 1/2] = \pr_x[|\linner{u}{x}| \geq 5\sqrt{m\log(n)}] \leq \pr_x[|\linner{u}{x}| \geq 8\lnorm{u}_2\sqrt{\log(n)}] \leq 2/n^{32}.
			\]
			Union bound implies that $\lnorm{P}_{\infty} \leq 1/2$ except for probability $2/n^{31}$.
	\end{proof}

	\begin{proof}[Proof of Lemma~\ref{lem:littlewood}]\skipline

			\noindent\textbf{Case $p=\underline{0}$.} In this case $X_{v,p} = \sum_{i=1}^{n} x_i v_i$ with the $x_i$'s uniformly and independently distributed in $\spm$. Without loss of generality we may assume $v_i \geq 0$ for all $i$. For any $t \in \reals$ the set of $x$'s for which $|X_{v,p} - t| < \alpha$ corresponds to an anti-chain in every subcube of $\spm^n$, generated by the $[a]$ coordinates. Sperner's theorem~\cite{Sperner28} hence implies $\pr[|X_{v,p} - t| < \alpha] \leq \binom{a}{\lfloor a/2 \rceil}/2^a = O(1/\sqrt{a})$, yielding $Q(\alpha, X_{v, p}) \leq O(1/\sqrt{a})$.

			\noindent\textbf{Case $\lnorm{p}_\infty \leq 1/2$.}
			We present $X_{v,p}$ as a mixture of random variables $X_{v', p'}$ with $p'=\underline{0}$, and with $v'$ usually having $\Omega(a)$ coordinates $\geq a$, and the previous case of the proof.

			Indeed, choose $v'$ to be $v$ with each coordinate $i$ zeroed out independently with probability $|p_i|$. If $v_i'$ was zeroed out, choose $\rho_i$ to be $v_i \cdot \frac{p_i}{|p_i|}$, otherwise (or if $p_i=0$) -- $\rho_i = 0$. We note that if $x\sim \mu_p$ and $x' \sim \mu_{\underline{0}}$, then $v_i \cdot x_i$ can be realized as $\rho_i + v_i'\cdot x'_i$ -- the excess probability associated with one of the outcomes of $x_i \in \spm$ is simulated through $\rho_i$.

			We conclude that the distribution of $X_{v,p}$ can be sampled by first drawing $v', \rho$ and then sampling $\sum_i v_i' \cdot x'_i + \sum_i \rho_i$. In particular, 
			\beg{equation}{eq:littleuse}{
				Q(\alpha, X_{v,p}) \leq \be[Q(\alpha, X_{v',\underline{0}})] \leq \be[\min(O(1/\sqrt{a'}), 1)],
			}
			with $a'$ the number of coordinates $k$ with $|v'_k| \geq \alpha$. By definition of $v'$, $a'$ is distributed as a sum of $a$ Bernoulli random variables, the $i$'th of which having success probability $1-|p_i|$. Since $|p_i| \leq 1/2$, we have $\be[a'] \geq a/2$. Moreover, the distribution of $a'$ is quite concentrated around $\be[a']$, with Chebyshev's inequality implying that $\pr[a' < a/4] \leq 4/a$. This with~\eqref{eq:littleuse} give
			\[
				Q(\alpha, X_{v,p}) \leq O(\be[\min(1/\sqrt{a'}, 1)]) \leq O(1/\sqrt{a/4} + 4/a) = O(1/\sqrt{a}).
			\]
	\end{proof}
	\end{appendices}
\end{document}